\theoremstyle{plain}
\newtheorem{theorem}{Theorem}[section]
\newtheorem{proposition}{Proposition}
\theoremstyle{definition}
\newtheorem{definition}[theorem]{Definition}
\newtheorem{remark}{Remark}
\def \dv {\mathrm{div}}
\def \d {\mathrm{d}}
\title[Logarithmic stability estimates for initial data] 
      {Logarithmic stability estimates for initial data in Ornstein-Uhlenbeck equation on $L^2$-space}
\author{S. E. Chorfi}
\author{L. Maniar}
\address{S. E. Chorfi, L. Maniar, Cadi Ayyad University, Faculty of Sciences Semlalia, LMDP, UMMISCO (IRD-UPMC), B.P. 2390, Marrakesh, Morocco}
\email{s.chorfi@uca.ac.ma, maniar@uca.ma}
\subjclass[2020]{93B07, 35R30, 93B05, 35K65}
\keywords{Observability, controllability, inverse problem, logarithmic convexity, Ornstein-Uhlenbeck equation}
\begin{document}
\begin{abstract}
In this paper, we continue the investigation on the connection between observability and inverse problems for a class of parabolic equations with unbounded first order coefficients. We prove new logarithmic stability estimates for a class of initial data in the Ornstein-Uhlenbeck equation posed on $L^2\left(\mathbb{R}^N\right)$ with respect to the Lebesgue measure. The proofs combine observability and logarithmic convexity results that include a non-analytic semigroup case. This completes the picture of the recent results obtained for the analytic Ornstein-Uhlenbeck semigroup on $L^2$-space with invariant measure.
\end{abstract}
\maketitle

\section{Introduction}
Let $N\ge 1$ be an integer and let $\theta >0$ be a fixed time. We consider the Ornstein-Uhlenbeck equation given by
\begin{empheq}[left =\empheqlbrace]{alignat=2}
\begin{aligned}
&\partial_t u = \Delta u + B x\cdot \nabla u, \qquad 0<t<\theta , && x\in \mathbb{R}^N, \\
& u\rvert_{t=0}=u_0(x),   && x \in \mathbb{R}^N, \label{e1}
\end{aligned}
\end{empheq}
where $B$ is a real constant $N\times N$-matrix, $u_0 \in L^2\left(\mathbb{R}^N\right)$ is an unknown initial datum, and the
dot sign denotes the inner product in $\mathbb{R}^N$.

In the present paper, we are concerned with the following problem:

\noindent\textbf{Inverse initial data problem.} Determine the unknown initial datum $u_0$ belonging to an admissible set $\mathcal{I}$ from the measurement
$$u\rvert_{(0,\theta) \times \omega},$$
where $\omega \subset \mathbb{R}^N$ is an observation region that will be given later. Two problems to be answered:
\begin{itemize}
\item[$\bullet$] Uniqueness: for two initial data $u_0$ and $u_1$, does the equality $u(u_0)=u(u_1)$ in $\left(0, \theta\right)\times \omega$ imply $u_0=u_1$ in $L^2\left(\mathbb{R}^N\right)$?
\item[$\bullet$] Stability: is it possible to estimate $\|u_0-u_1\|_{L^2\left(\mathbb{R}^N\right)}$ by a suitable norm of the quantity $\left(u(u_0)-u(u_1)\right)\rvert_{\left(0, \theta\right)\times \omega}$?
\end{itemize}
In the parabolic framework, one expects a stability estimate of logarithmic type, i.e.,
\begin{equation} \label{lsi}
\|u_0\|_{L^2\left(\mathbb{R}^N\right)} \le \frac{c}{|\log \|u\||^\alpha}
\end{equation}
for a constant $c>0$, some $\alpha \in (0,1]$, a suitable norm of $u\rvert_\omega$, typically $\|u\|_{L^2\left(0, \theta ; L^{2}(\omega)\right)}$ or $\|u\|_{H^1\left(0, \theta ; L^{2}(\omega)\right)}$, and for all $u_0 \in \mathcal{I}$.

The estimate \eqref{lsi} is commonly referred to as conditional stability since it depends on the admissible set $\mathcal{I}$. Note that the rate of conditional stability depends on the choice of the admissible set. Also, conditional stability is very useful when dealing with the numerical reconstruction of initial data, see for instance \cite{LYZ'09, YZ'01}.

The Ornstein-Uhlenbeck equation \eqref{e1} appears in several fields of research and applications including stochastic processes, Malliavin calculus, theory of quantum fields as well as control theory. We refer to \cite{BP'18, CFMP'05, LRM'16, Lu'97, LMP'20, Me'01, MPP'02, MPRS'03} and the references therein for more details. The associated Ornstein-Uhlenbeck semigroup has been extensively studied in two spaces; the space $L^2\left(\mathbb{R}^N, \d x\right)$ (with the Lebesgue measure, shortly denoted by $L^2\left(\mathbb{R}^N\right)$) and in $L^2\left(\mathbb{R}^N, \d \mu\right)$ with the so-called invariant measure $\mu$. The existence of $\mu$ is equivalent to the fact that the spectrum of the matrix $B$ lies in the left half-plane, i.e.,
$$\sigma(B)\subset \mathbb{C}_-:=\{\lambda \in \mathbb{C} \colon \mathrm{Re}\, \lambda <0\}.$$
The semigroup enjoys different properties with respect to the previous spaces. It is a $C_0$-semigroup on $L^2\left(\mathbb{R}^N, \d x\right)$ which is not analytic (it is not an eventually norm continuous semigroup), unless $B=0$, whereas its realization in $L^2\left(\mathbb{R}^N, \d \mu\right)$ is an analytic $C_0$-semigroup with angle depending on the matrix $B$.

As for inverse problems, parabolic equations with bounded coefficients have been extensively studied, see for instance \cite{KL'06, KT'07, LYZ'09, YZ'08}. Surprisingly, there are only a few researches for parabolic equations with unbounded coefficients, especially for the Ornstein-Uhlenbeck equation. We are only aware of the paper by Lorenzi \cite{Lo'11}, where the author studied the identification (existence and uniqueness) of a constant $\alpha \in \mathbb{R}$ such that $B=\alpha B_0$ in \eqref{e1}, where the matrix $B_0$ and some prescribed data are known. This inverse problem was motivated by the determination of the matrix $B$ since its properties play a crucial role in the study of the Ornstein-Uhlenbeck equation.

Inverse problems are closely related to controllability and observability questions. The link between the two theories was initially observed in the case of a hyperbolic system in \cite{PY'96}. Then many stability estimates have been proven for inverse problems by using observability inequalities. Such inverse problems for parabolic systems are highly ill-posed in the sense of Hadamard, so one cannot expect strong stability in general. In this context, the logarithmic convexity method has been successfully applied to prove logarithmic stability for initial data. Many works have been done for second-order parabolic equations with bounded coefficients \cite{KL'06, KT'07, XY'00, YZ'08}, and more generally for some abstract systems governed by analytic semigroups of angle $\frac{\pi}{2}$ as in the self-adjoint case; see \cite{GaT'11}.

We emphasize that the logarithmic stability for initial data in a parabolic equation with \textit{unbounded coefficients} has not been studied before, as far as we know. The prototype of such equations is given by the Ornstein-Uhlenbeck equation on $L^2$-spaces, which contains an unbounded gradient term. Recently, the authors of \cite{ACM'22} have investigated the case of analytic semigroups and established an abstract stability result. Consequently, the stability problem for the Ornstein-Uhlenbeck equation has been recently resolved on $L^2\left(\mathbb{R}^N, \d \mu\right)$ with the invariant measure when $\sigma(B) \subset \mathbb{C}_-$. Therein, the unbounded gradient coefficient is handled thanks to the exponential decay of the measure density. In that case, the semigroup is analytic and the angle of analyticity is generally smaller than $\frac{\pi}{2}$. The developed approach does not work on $L^2\left(\mathbb{R}^N\right)$ due to the lack of analyticity of the semigroup. The logarithmic convexity estimate has been initially known for self-adjoint operators since the work of Agmon and Nirenberg \cite{AN'63}, and has been generalized to analytic semigroups in \cite{KP'60} (see also \cite{Mi'75}). In our non-analytic framework, we exploit the explicit representation formula of the semigroup to prove a logarithmic convexity estimate, and then deduce a logarithmic stability for initial data using final state observability with respect to the $L^2\left(\mathbb{R}^N\right)$-norm.

We mainly aim to clarify the relations between the final state observability, logarithmic convexity and stability estimates for initial data in the Ornstein-Uhlenbeck equation. Such a relation is by now fully understood in the analytic framework of parabolic equations with bounded coefficients. We start this article by gathering some useful facts on the Ornstein-Uhlenbeck equation (Section \ref{sec2}). In Section \ref{sec3}, we first prove a logarithmic convexity estimate for the Ornstein-Uhlenbeck equation \eqref{e1} exploiting the explicit form of the associated semigroup. Then we recall some recent results concerning the final state observability of \eqref{e1}. Afterward in Section \ref{sec4}, we show that the logarithmic convexity along with the observability imply some logarithmic stability estimates for certain classes of initial data. These results allow us in particular to extend some known results for the heat equation on $\mathbb{R}^N$. Finally, Section \ref{sec5} is devoted to some final comments and open problems related to the stability estimates for initial data in parabolic equations with unbounded coefficients.

\section{Miscellaneous facts on the Ornstein-Uhlenbeck semigroup} \label{sec2}
We summarize some useful facts about the Ornstein-Uhlenbeck semigroup on $L^2\left(\mathbb{R}^N\right)$ for future use. We refer to \cite{Me'01,MPRS'03} and the references therein for detailed proofs.

Denote by $I$ the identity operator and by $\|\cdot\|$ the standard norm in $L^{2}\left(\mathbb{R}^{N}\right)$. The Ornstein-Uhlenbeck operator is the sum of the diffusion term and the drift term.

\subsection{The diffusion part}
Let us first consider the diffusion term. It is well known that the Laplace operator $\Delta$ on $L^{2}\left(\mathbb{R}^N\right)
$ with maximal domain
\begin{align*}
D(\Delta) &:=\left\{u \in L^{2}\left(\mathbb{R}^{N}\right): \Delta u \in L^{2}\left(\mathbb{R}^{N}\right)\right\}\\
&= H^{2}\left(\mathbb{R}^N\right),
\end{align*}
is self-adjoint and generates the heat $C_0$-semigroup which is explicitly defined by $U(t) \colon L^2\left(\mathbb{R}^N\right) \rightarrow L^2\left(\mathbb{R}^N\right)$,
\begin{align*}
U(0) &=I,\\
(U(t) f)(x)&=\frac{1}{(4 \pi t)^{N/2}} \int_{\mathbb{R}^{N}} \mathrm{e}^{-\frac{|x-y|^2}{4 t}} f(y) \,\d y, \qquad t>0, \quad x\in \mathbb{R}^N
\end{align*}
for every $f\in L^2\left(\mathbb{R}^N\right)$, where $|\cdot|$ denotes the Euclidean norm on $\mathbb{R}^N$. We clearly see that
\begin{equation*}
U(t) f=h_{t} * f,
\end{equation*}
where
\begin{equation*}
h_{t}(y)=\frac{1}{(4 \pi t)^{N/2}} \mathrm{e}^{-\frac{|y|^2}{4 t}}, \quad t>0, \quad y\in \mathbb{R}^N.
\end{equation*}
Note that the heat semigroup $\left(U(t)\right)_{t \ge 0}$ is analytic and contractive on $L^{2}\left(\mathbb{R}^N\right)$.

\subsection{The drift part}
Let $B$ be the drift matrix in \eqref{e1}. We consider the drift operator
$$
\mathcal{L}=B x \cdot \nabla=\sum_{i, j=1}^{N} b_{i j} x_{j} \partial_{x_i},
$$
with its maximal domain
$$
D(\mathcal{L})=\left\{u \in L^{2}\left(\mathbb{R}^{N}\right): \mathcal{L} u \in L^{2}\left(\mathbb{R}^{N}\right)\right\},
$$
where $\mathcal{L} u$ is understood in the sense of distributions. The operator $\left(\mathcal{L}, D(\mathcal{L})\right)$ is closed in $L^{2}\left(\mathbb{R}^{N}\right)$ and the space $C_{c}^{\infty}(\mathbb{R}^N)$ is a core of $\mathcal{L}$. The operator $\mathcal{L}$ generates a $C_{0}$-group $\left(S(t)\right)_{t\in \mathbb{R}}$ given by
\begin{equation}
\left(S(t) f\right)(x)=f\left(\mathrm{e}^{t B} x\right), \quad t\in \mathbb{R}, \quad x\in \mathbb{R}^N
\end{equation}
for $f \in L^{2}\left(\mathbb{R}^{N}\right)$. Moreover,
\begin{equation} \label{edn}
\|S(t) f\|=\mathrm{e}^{-\frac{t}{2} \operatorname{tr}(B)}\|f\|, \qquad t\in \mathbb{R} 
\end{equation}
for every $f \in L^{2}\left(\mathbb{R}^{N}\right)$.

\subsection{The Ornstein-Uhlenbeck semigroup}
Let $B^*$ denote the transpose matrix of $B$. We introduce the matrices
$$
Q_{t}=\int_{0}^{t} \mathrm{e}^{s B} \, \mathrm{e}^{s B^{*}} \,\d s, \qquad t>0,
$$
which are symmetric and positive definite ($Q_0$ is the null matrix). The Ornstein-Uhlenbeck semigroup $T(t) \colon L^2\left(\mathbb{R}^N\right) \rightarrow L^2\left(\mathbb{R}^N\right)$ is given by Kolmogorov's formula
\begin{align*}
T(0) &=I,\\
(T(t) f)(x)&=\frac{1}{\sqrt{(4 \pi)^{N} \operatorname{det} Q_{t}}} \int_{\mathbb{R}^{N}} \mathrm{e}^{-\frac{1}{4}\left\langle Q_{t}^{-1} y, y\right\rangle} f\left(\mathrm{e}^{t B} x-y\right) \d y, \quad t>0, \quad x\in \mathbb{R}^N
\end{align*}
for every $f\in L^2\left(\mathbb{R}^N\right)$. This can be written as
\begin{equation}\label{eouf}
T(t) f=S(t)\left(g_{t} * f\right),
\end{equation}
where
\begin{equation}
g_{t}(y)=\frac{1}{\sqrt{(4 \pi)^{N} \operatorname{det} Q_{t}}} \mathrm{e}^{-\frac{1}{4}\left\langle Q_{t}^{-1} y, y\right\rangle}, \quad t>0, \quad y\in \mathbb{R}^N.
\end{equation}
Let us define the maximal domain of $\mathcal{A}$ by
$$
D\left(\mathcal{A}\right)=\left\{u \in L^{2}\left(\mathbb{R}^{N}\right) \cap H_{\mathrm{loc}}^{2}\left(\mathbb{R}^{N}\right) : \mathcal{A} u \in L^{2}\left(\mathbb{R}^{N}\right)\right\}.
$$
The space $C_{c}^{\infty}\left(\mathbb{R}^{N}\right)$ is a core for $\mathcal{A}$. More precisely, $D\left(\mathcal{A}\right)$ is characterized by
$$
D\left(\mathcal{A}\right)=\left\{u \in H^{2}\left(\mathbb{R}^{N}\right) : B x\cdot \nabla u \in L^{2}\left(\mathbb{R}^{N}\right)\right\}=D(\Delta) \cap D(\mathcal{L}).
$$
It is known that $\left(\mathcal{A},D\left(\mathcal{A}\right)\right)$ generates a $C_0$-semigroup on $L^{2}\left(\mathbb{R}^{N}\right)$ given by $\left(T(t)\right)_{t\ge 0}$ which satisfies the estimate
$$
\|T(t)\| \leq \mathrm{e}^{-\frac{t}{2} \operatorname{tr}(B)}, \qquad t\ge 0,
$$
where $\|\cdot\|$ also denotes the operator norm. The $C_0$-semigroup $\left(T(t)\right)_{t\ge 0}$ is not analytic in $L^{2}\left(\mathbb{R}^{N}\right)$ (e.g. it is not an eventually norm continuous semigroup), except for $B=0$.

\section{Logarithmic convexity and observability} \label{sec3}
\subsection{Logarithmic convexity estimates}
The logarithmic convexity method is one of the well-known approaches that had been widely used to prove conditional stability for improperly posed problems such as backward parabolic equations as well as inverse initial data problems. The interested reader can be referred to \cite{AN'63, ACM'21,Is'17,KP'60,Pa'75} and the references therein.

Next we give an abstract definition to a logarithmic convexity estimate. Let $\left(H, \langle \cdot, \cdot \rangle\right)$ be a Hilbert space with corresponding norm $\|\cdot\|$. Let $\left(\mathrm{e}^{t A}\right)_{t \ge 0}$ be a $C_0$-semigroup on $H$ associated with its generator $A \colon D(A)\subset H\rightarrow H$. Inspired by \cite{AN'63, KP'60}, we introduce the following definition.
\begin{definition}
We say that the $C_0$-semigroup $\left(\mathrm{e}^{t A}\right)_{t \ge 0}$ satisfies a logarithmic convexity estimate for $\theta >0$ if there exists a constant $\kappa_\theta \ge 1$ and a function $w \colon (0,\theta) \rightarrow (0,1)$, $w(0)=0$ and $w(\theta)=1$, so that the following estimate holds
\begin{equation}\label{elc}
\left\|\mathrm{e}^{t A} u\right\| \le \kappa_\theta \|u\|^{1-w(t)} \left\|\mathrm{e}^{\theta A} u \right\|^{w(t)}
\end{equation}
for all $t\in [0,\theta]$ and all $u\in H$.
\end{definition}
It is known that the logarithmic convexity holds for self-adjoint bounded above operators with $\kappa_\theta=1$ and $w(t)=\dfrac{t}{\theta}$, see e.g. \cite[Section 2]{GaT'11}. If the operator is subordinated to its symmetric part, there is a logarithmic convexity result in \cite[Theorem 3.1.3]{Is'17}, where $w(t)=\dfrac{1-\mathrm{e}^{-ct}}{1-\mathrm{e}^{-c\theta}}$ for some constant $c>0$. More generally, it holds for analytic semigroups with a more general function $w(t)$. We refer to \cite{ACM'22} for more details.

\begin{remark}
Note that the estimate \eqref{elc} implies the backward uniqueness for the semigroup $\left(\mathrm{e}^{t A}\right)_{t \ge 0}$, namely the following property: if $\mathrm{e}^{\theta A} u=0$ for some $u\in H$, then $u=0$. Furthermore, the logarithmic convexity can be seen as a stability estimate for the backward uniqueness. More precisely, if one assumes a priori bound on the initial data $\|u\| \le R$ for some positive constant $R$, the norm of the corresponding solution $\left\|\mathrm{e}^{t A} u\right\|$, $t\in (0,\theta)$ is small whenever $\left\|\mathrm{e}^{\theta A} u \right\|$ is small.
\end{remark}

Since the Laplace operator with domain $H^2\left(\mathbb{R}^N\right)$ is self-adjoint and negative on $L^2\left(\mathbb{R}^N\right)$, the heat semigroup $\left(U(t)\right)_{t\ge 0}$ satisfies the logarithmic convexity
$$\|U(t) f\| \le \|f\|^{1-\frac{t}{\theta}} \|U(\theta) f\|^{\frac{t}{\theta}}, \qquad f\in L^2\left(\mathbb{R}^N\right), \qquad t\in [0,\theta].$$
In fact, the function $t \mapsto \|U(t) f\|^2$ is even log-convex in this case. The same holds for the drift $C_0$-group $\left(S(t)\right)_{t\in \mathbb{R}}$. Indeed, using \eqref{edn}, we see that
$$\|S(t) f\| = \|f\|^{1-\frac{t}{\theta}} \|S(\theta) f\|^{\frac{t}{\theta}}, \qquad f\in L^2\left(\mathbb{R}^N\right), \qquad t\in [0,\theta].$$
Next we prove a logarithmic convexity estimate for the Ornstein-Uhlenbeck semigroup $\left(T(t)\right)_{t\ge 0}$.
\begin{proposition}\label{proplc}
There exist constants $c=c(\theta)\in (0,1]$ and $\kappa_\theta \ge 1$ such that the following estimate holds
\begin{equation}\label{elc0}
\|T(t) f\| \le \kappa_\theta \|f\|^{1-c\frac{t}{\theta}} \|T(\theta)f \|^{c\frac{t}{\theta}}, \qquad f\in L^2\left(\mathbb{R}^N\right), \qquad t\in [0,\theta].
\end{equation}
\end{proposition}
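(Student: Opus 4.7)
The plan is to exploit the explicit factorization $T(t)f = S(t)(g_t * f)$ from \eqref{eouf} together with the scaling identity \eqref{edn}. Since $\|S(t)h\| = \mathrm{e}^{-(t/2)\operatorname{tr}(B)}\|h\|$ for every $h\in L^2(\mathbb{R}^N)$, it is enough to prove a logarithmic convexity bound for the convolution operator $h \mapsto g_t * h$; the exponential factor from $S(t)$ will be absorbed into $\kappa_\theta$ at the end.

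I would then pass to the Fourier side. Since $\widehat{g_t}(\xi) = \mathrm{e}^{-\langle Q_t\xi,\xi\rangle}$, Plancherel's theorem gives
\[
\|g_t * f\|^2 = \frac{1}{(2\pi)^N}\int_{\mathbb{R}^N} \mathrm{e}^{-2\langle Q_t\xi,\xi\rangle}\,|\widehat{f}(\xi)|^2\,\d\xi.
\]
The key step is then a matrix inequality of the form $Q_t \ge c\,(t/\theta)\,Q_\theta$ for all $t\in[0,\theta]$, with a constant $c = c(\theta) \in (0,1]$. Because $\dot Q_s = \mathrm{e}^{sB}\mathrm{e}^{sB^{*}}$ is symmetric, positive definite and continuous in $s$, on the compact interval $[0,\theta]$ there exist $0 < m_0 \le M_0$ such that $m_0\,I \le \mathrm{e}^{sB}\mathrm{e}^{sB^{*}} \le M_0\,I$. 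Integrating in $s$ yields $m_0 t\,I \le Q_t$ and $Q_\theta \le M_0\theta\,I$, hence $Q_t \ge (m_0/M_0)(t/\theta)\,Q_\theta$. I would set $c := m_0/M_0 \in (0,1]$.

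Exponentiating this gives the pointwise bound $\mathrm{e}^{-2\langle Q_t\xi,\xi\rangle} \le \bigl(\mathrm{e}^{-2\langle Q_\theta\xi,\xi\rangle}\bigr)^{ct/\theta}$ for every $\xi\in\mathbb{R}^N$. Plugging it into the Plancherel identity and applying H\"older's inequality to the finite measure $|\widehat{f}|^2\,\d\xi$ with conjugate exponents $\theta/(ct)$ and $\theta/(\theta - ct)$ produces
\[
\|g_t * f\|^2 \le \|g_\theta * f\|^{2ct/\theta}\,\|f\|^{2(1-ct/\theta)}.
\]
Reverting to $T(t)$ via \eqref{edn} reintroduces a prefactor $\mathrm{e}^{(c-1)(t/2)\operatorname{tr}(B)}$, uniformly bounded on $[0,\theta]$ by $\kappa_\theta := \max\{1,\,\mathrm{e}^{(1-c)(\theta/2)|\operatorname{tr}(B)|}\} \ge 1$, which yields \eqref{elc0}.

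The main obstacle is really the matrix inequality $Q_t \ge c(t/\theta)Q_\theta$. It is easy in this concrete setting, but it is also precisely the place where one loses the sharp exponent $t/\theta$ and picks up a constant $\kappa_\theta>1$: both defects reflect the non-self-adjointness of $\mathcal{A}$ and the mismatch between the spectra of $Q_t$ and $Q_\theta$. A sharper anisotropic comparison would produce a $\xi$-dependent exponent that no longer fits the scalar H\"older interpolation used above, so the crude eigenvalue pinch is essentially forced on us.
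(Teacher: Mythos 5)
Your proposal is correct and follows essentially the same route as the paper: reduce via \eqref{eouf} and \eqref{edn} to the convolution operators, pass to the Fourier side, establish the comparison $\langle Q_t\xi,\xi\rangle \ge c\frac{t}{\theta}\langle Q_\theta\xi,\xi\rangle$, and conclude by H\"older interpolation with exponents $\frac{\theta}{ct}$, $\frac{\theta}{\theta-ct}$, ending with the same $\kappa_\theta=\mathrm{e}^{\frac{|\operatorname{tr}(B)|}{2}(1-c)\theta}$. The only (harmless) divergence is in proving the matrix inequality: you pinch the integrand $\mathrm{e}^{sB}\mathrm{e}^{sB^{*}}$ between $m_0 I$ and $M_0 I$ and integrate, whereas the paper applies a compactness argument to $(t,\xi)\mapsto \frac{1}{t}\langle Q_t\xi,\xi\rangle$ on $[0,\theta]\times\mathbb{S}^{N-1}$; both yield $c=c(\theta)\in(0,1]$.
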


\begin{proof}
Using \eqref{edn} and \eqref{eouf}, the estimate \eqref{elc0} is implied by the following inequality
\begin{equation}\label{elc1}
\|g_t * f\| \le \|f\|^{1-c\frac{t}{\theta}} \|g_\theta * f \|^{c\frac{t}{\theta}}, \qquad f\in L^2\left(\mathbb{R}^N\right), \qquad t\in (0,\theta],
\end{equation}
with $\kappa_\theta=\mathrm{e}^{\frac{|\mathrm{tr}(B)|}{2}(1-c)\theta}$. Invoking the Fourier transform denoted by $\widehat{f}$ for any $f\in L^{2}\left(\mathbb{R}^{N}\right)$, we obtain the identities $
\widehat{g}_{t}(\xi)=\mathrm{e}^{-\left\langle Q_{t} \xi, \xi\right\rangle}$ and $\widehat{g_{t} * f}(\xi)=\mathrm{e}^{-\left\langle Q_{t} \xi, \xi\right\rangle} \widehat{f}(\xi)$, $\xi \in \mathbb{R}^N$. We first claim that there exists a constant $c_\theta >0$ such that
\begin{equation}\label{eqq}
\left\langle Q_{t} \xi, \xi\right\rangle \ge c_\theta \frac{t}{\theta} \left\langle Q_{\theta} \xi, \xi\right\rangle \qquad \forall t\in [0,\theta], \quad \forall \xi\in \mathbb{R}^N.
\end{equation}
Indeed, since $\frac{Q_t}{t} \to I_N$ (the identity matrix) as $t\to 0$, by continuity and positivity of the function
$$\beta(t,\xi)=
\begin{cases}
\displaystyle \frac{1}{t}\left\langle Q_{t} \xi, \xi\right\rangle &\mbox{ if } t>0,\\
\left|\xi\right|^{2} &\mbox{ if } t=0,
\end{cases}$$
in the compact set $[0,\theta]\times \mathbb{S}^{N-1}$ ($\mathbb{S}^{N-1}=\{\xi \in \mathbb{R}^N: |\xi|=1 \}$), there exist positive constants $c_1$ and $c_2$ (dependent on $\theta$) such that
$$c_1 t \le \left\langle Q_{t} \xi, \xi\right\rangle \le c_2 t \qquad \forall t\in [0,\theta],\quad \forall \xi\in \mathbb{S}^{N-1}.$$
Thus,
$$c_1 t |\xi|^2 \le \left\langle Q_{t} \xi, \xi\right\rangle \le c_2 t |\xi|^2 \qquad \forall t\in [0,\theta],\quad \forall \xi\in \mathbb{R}^N.$$
This inequality implies \eqref{eqq} with $c:=c_\theta=\frac{c_1}{c2}\le 1$. The inequality \eqref{eqq} entails that
$$\mathrm{e}^{-2\left\langle Q_{t} \xi, \xi\right\rangle} |\widehat{f}(\xi)|^2 \le |\widehat{f}(\xi)|^{2\left(1-c\frac{t}{\theta}\right)} \left(\mathrm{e}^{-2\left\langle Q_{\theta} \xi, \xi\right\rangle} |\widehat{f}(\xi)|^2\right)^{c\frac{t}{\theta}}.$$
Let $t \in (0,\theta)$ be fixed. Applying the Hölder inequality for $p=\frac{\theta}{\theta-ct}$, $q=\frac{\theta}{ct}$ and the functions $F(\xi)=|\widehat{f}(\xi)|^{2\left(1-c\frac{t}{\theta}\right)}$, $G(\xi)=\left(\mathrm{e}^{-2\left\langle Q_{\theta} \xi, \xi\right\rangle} |\widehat{f}(\xi)|^2\right)^{c\frac{t}{\theta}}$, we obtain \eqref{elc1}. This completes the proof of \eqref{elc0}.
\end{proof}

\begin{remark}
Proposition \ref{proplc}, aside from being of independent interest, implies the backward uniqueness property for the Ornstein-Uhlenbeck semigroup. This property plays an important role in some control problems.
\end{remark}

\subsection{Observability estimate}
In this subsection, we discuss some recent results on the final state observability for the system \eqref{e1}. We refer to \cite{TW'09, Za'20} for the general theory.

In \cite{BP'18}, the authors have shown the final state observability of the system \eqref{e1} at any positive time $\theta$ whenever the observation region $\omega \subset \mathbb{R}^N$ is a nonempty open set satisfying the geometric condition
\begin{equation}\label{obsreg1}
\exists \delta, r>0, \forall y \in \mathbb{R}^N, \exists y' \in \omega, \quad B\left(y', r\right) \subset \omega \text { and }\left|y-y'\right|<\delta.
\end{equation}
This condition was known as a sufficient condition ensuring the final state observability of the heat equation on $\mathbb{R}^N$ at any positive time \cite{Mi'05}. The later property has been recently characterized by the thickness of the set $\omega$ \cite{EV'18, WWZZ'19}:
\begin{definition}
Let $\gamma \in (0,1]$ and $a=\left(a_{1}, \ldots, a_{N}\right) \in\left(\mathbb{R}_{+}^{*}\right)^{N}$. Let us denote by $\mathcal{C}=\left[0, a_{1}\right] \times \ldots \times\left[0, a_{N}\right]$.
\begin{itemize}
\item A measurable set $\omega \subset \mathbb{R}^{N}$ is said to be $(\gamma, a)$-thick if
$$
\left|\omega \cap \left(x+\mathcal{C}\right)\right| \geq \gamma \prod_{j=1}^{N} a_{j} \qquad \forall x \in \mathbb{R}^{N},  
$$
where $\left|E\right|$ denotes the Lebesgue measure of a measurable set $E \subset \mathbb{R}^{N}$.
\item A measurable set $\omega \subset \mathbb{R}^{N}$ is thick if there exist $\gamma \in (0,1]$ and $a \in\left(\mathbb{R}_{+}^{*}\right)^{N}$ such that $\omega$ is $(\gamma, a)$-thick.
\item A measurable set $\omega \subset \mathbb{R}^{N}$ is $\gamma$-thick at scale $L>0$ if $\omega$ is $(\gamma, a)$-thick and $a=(L, \ldots, L) \in\left(\mathbb{R}_{+}^{*}\right)^{N}$.
\end{itemize}
\end{definition}
We emphasize that the notion of thickness is weaker than the condition \eqref{obsreg1}. Furthermore, the thickness of the observation set $\omega$ turns out to be a sufficient condition that ensures the final state observability of several parabolic equations at any positive time, including the system \eqref{e1}.

Next we state the observability inequality of system \eqref{e1} from thick sets.
\begin{proposition}
Let $\theta>0$ be fixed and let $\omega \subset \mathbb{R}^N$ be a thick set. Consider $u$ the mild solution of \eqref{e1}. Then there exists a positive constant $\kappa_\theta$ such that for all $u_0 \in L^{2}\left(\mathbb{R}^{N}\right)$, we have
\begin{equation}\label{obsineq}
\|u(\theta,\cdot)\|_{L^{2}\left(\mathbb{R}^{N}\right)}^2 \leq \kappa_\theta^2 \int_0^\theta \|u(t,\cdot)\|_{L^{2}(\omega)}^2\,\d t.
\end{equation}
\end{proposition}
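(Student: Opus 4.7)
\medskip

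\noindent\textbf{Proof proposal.} My plan is to combine a Lebeau--Robbiano--Miller frequency truncation scheme with a Logvinenko--Sereda (Kovrijkine) spectral inequality adapted to thick sets. The first step is to reduce the observability for the non-analytic semigroup $(T(t))_{t\ge 0}$ to one for the pure Gaussian convolution family $G(t)f := g_t * f$. Using the factorization $T(t)f = S(t)(g_t*f)$ together with the isometric-up-to-exponential change of variables $(S(t)f)(x)=f(e^{tB}x)$, one has
$$\|T(t)f\|_{L^2(\omega)}^2 = e^{-t\,\mathrm{tr}(B)}\, \|g_t*f\|_{L^2(e^{tB}\omega)}^2.$$
Because invertible linear maps send thick sets to thick sets with parameters depending continuously on the map, the family $(e^{tB}\omega)_{t\in[0,\theta]}$ is uniformly thick, so the problem reduces to a final state observability of $G(t)$ from a (uniformly) thick observation set.

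Next I would assemble two ingredients. The \emph{spectral inequality} (Kovrijkine's sharp form of Logvinenko--Sereda for thick sets): for every $R>0$, every $f\in L^2(\mathbb{R}^N)$ with $\widehat{f}$ supported in $B(0,R)$, and every $(\gamma,a)$-thick set $\Omega$,
$$\|f\|_{L^2(\mathbb{R}^N)}\;\le\; C_1\,(C_2/\gamma)^{C_3(|a|R+1)}\,\|f\|_{L^2(\Omega)}.$$
The \emph{dissipation estimate} follows from the Fourier symbol $\widehat{G(t)f}(\xi)=e^{-\langle Q_t\xi,\xi\rangle}\widehat{f}(\xi)$ and the coercivity $\langle Q_t\xi,\xi\rangle\ge c_1 t|\xi|^2$ already extracted in the proof of Proposition~\ref{proplc}; Plancherel then gives
$$\int_{|\xi|\ge R}|\widehat{G(t)f}(\xi)|^2\,\d\xi\;\le\; e^{-2c_1 t R^2}\,\|f\|^2.$$

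I would then conclude by Miller's telescoping: split $[0,\theta]$ into dyadic sub-intervals $I_k$ with cut-off radii $R_k$ growing geometrically, apply the spectral inequality to the frequency-truncated part $\chi_{|\xi|\le R_k}\,\widehat{G(t)f}$ (with $\Omega=e^{tB}\omega$), and use the Gaussian dissipation to absorb the high-frequency remainder. Choosing $R_k$ so that the parabolic decay $e^{-2c_1 |I_k| R_k^2}$ outweighs the exponential cost $(C_2/\gamma)^{C_3|a|R_k}$ from the spectral inequality, the usual summation of the resulting geometric series yields \eqref{obsineq} with $\kappa_\theta$ collecting all the uniform-in-$t$ constants.

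The main technical obstacle is the time-varying nature of the effective observation set $e^{tB}\omega$, since the telescoping argument usually assumes a fixed target. This requires verifying that the thickness parameters $(\gamma(t),a(t))$ of $e^{tB}\omega$, and consequently the constants $C_1,C_2,C_3$ in the Kovrijkine inequality, can be bounded uniformly in $t\in[0,\theta]$; this follows from the continuity of $t\mapsto e^{tB}$ on the compact interval $[0,\theta]$ together with the elementary stability of thickness under invertible linear changes of coordinates. Once this uniformity is in hand, the argument proceeds as in the self-adjoint heat case treated in \cite{EV'18,WWZZ'19}.
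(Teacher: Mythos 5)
The paper itself gives no proof of this proposition: it quotes the result from Alphonse--Bernier \cite{AB'20} (see also \cite{BP'18,EV'18,WWZZ'19}), so there is no internal argument to compare against. Your sketch is, in substance, a reconstruction of how those references prove it: Kovrijkine's quantitative Logvinenko--Sereda inequality on thick sets as the spectral inequality, Gaussian dissipation read off the symbol $\mathrm{e}^{-\langle Q_t\xi,\xi\rangle}$, and the adapted Lebeau--Robbiano--Miller telescoping. The reduction $\|T(t)f\|_{L^2(\omega)}^2=\mathrm{e}^{-t\,\mathrm{tr}(B)}\|g_t*f\|_{L^2(\mathrm{e}^{tB}\omega)}^2$ is correct, and the observation that $(\mathrm{e}^{tB}\omega)_{t\in[0,\theta]}$ is uniformly thick is the right way to absorb the drift.

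One point you must address explicitly: after your reduction the family $G(t)f=g_t*f$ is \emph{not} a semigroup (one has $Q_{t+s}=Q_t+\mathrm{e}^{tB}Q_s\,\mathrm{e}^{tB^*}\neq Q_t+Q_s$ in general), whereas the telescoping scheme as usually stated (e.g.\ the abstract observability result in \cite{BP'18}) is formulated for semigroups and requires dissipation between \emph{consecutive intermediate times}, not only from $t=0$ as in the estimate you wrote. The repair is available but has to be said: for $0\le s\le t\le\theta$,
$$Q_t-Q_s=\int_s^t\mathrm{e}^{rB}\,\mathrm{e}^{rB^*}\,\d r\;\ge\; c_\theta\,(t-s)\,I_N,$$
since the smallest eigenvalue of $\mathrm{e}^{rB}\mathrm{e}^{rB^*}$ is bounded below on $[0,\theta]$; by Plancherel this gives both the relative dissipation
$$\int_{|\xi|\ge R}\bigl|\widehat{g_t*f}(\xi)\bigr|^2\,\d\xi\;\le\;\mathrm{e}^{-2c_\theta(t-s)R^2}\,\|g_s*f\|^2$$
and the monotonicity $\|g_t*f\|\le\|g_s*f\|$, which are exactly the two inputs the telescoping needs in place of the semigroup property. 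Alternatively one keeps the genuine semigroup $T(t)$ and proves the frequency-localized smoothing estimate for it directly, which is the route taken in \cite{AB'20}. With either repair, your outline is a correct and essentially standard proof of the proposition.
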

The above result has been recently proven in \cite{AB'20} for the possibly degenerate fractional Ornstein-Uhlenbeck equation under a Kalman rank condition. It improves its counterpart in \cite{BP'18} regarding the Ornstein-Uhlenbeck equation for observation sets $\omega$ satisfying \eqref{obsreg1} which has been considered in \cite{ACM'22}.

\section{Logarithmic stability for a class of initial data} \label{sec4}
We introduce the set of admissible initial data:
$$\mathcal{I}_{R}=\left\{u_0 \in D(\mathcal{A}) \colon \|u_0\|_{D(\mathcal{A})} \leq R\right\}$$
for a fixed constant $R>0$.

\begin{theorem}\label{thmlogstab1}
Let $\theta>0$ be fixed and let $\omega \subset \mathbb{R}^N$ be a thick set. There exist positive constants $C$ and $C_1$ depending on $(N, \theta, \omega, R)$ such that, for all $u_0 \in \mathcal{I}_R$,
\begin{equation}
\|u_0\|_{L^{2}\left(\mathbb{R}^N\right)} \leq \frac{-C}{\log \left(C_1\|u\|_{H^1\left(0, \theta ; L^{2}(\omega)\right)}\right)}
\end{equation}
for $\|u\|_{H^1\left(0, \theta ; L^{2}(\omega)\right)}$ sufficiently small, where $u$ is the solution of system \eqref{e1}.
\end{theorem}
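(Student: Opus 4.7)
The plan is to combine Proposition \ref{proplc} with the observability inequality \eqref{obsineq}, but applied not to $u$ itself but to its time derivative $v := \partial_t u = \mathcal{A} u$. The point is that logarithmic convexity only interpolates intermediate states between $u_0$ and $u(\theta)$; it does not directly convert smallness of $\|u(\theta)\|$ into smallness of $\|u_0\|$. To recover $u_0$, I would use the fundamental theorem of calculus
$$u_0 = u(\theta) - \int_0^\theta \partial_t u(t)\,\mathrm{d}t,$$
which is valid in $L^2(\mathbb{R}^N)$ since $u_0 \in D(\mathcal{A})$ and therefore $u \in C^1([0,\theta];L^2(\mathbb{R}^N))$. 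This reduces the proof to separately bounding $\|u(\theta)\|$ and $\int_0^\theta \|\partial_t u(t)\|\,\mathrm{d}t$ by $\varepsilon := \|u\|_{H^1(0,\theta;L^2(\omega))}$.

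The first piece is immediate: by \eqref{obsineq} together with the embedding $H^1(0,\theta;L^2(\omega)) \hookrightarrow L^2(0,\theta;L^2(\omega))$, one has $\|u(\theta)\| \le \kappa_\theta \varepsilon$. For the second piece, I would observe that $v=\partial_t u$ solves the same autonomous system \eqref{e1} with initial datum $v_0 = \mathcal{A} u_0 \in L^2(\mathbb{R}^N)$ satisfying $\|v_0\| \le R$ by the admissibility condition. Applying \eqref{obsineq} to $v$ gives
$$\|v(\theta)\|^2 \le \kappa_\theta^2 \int_0^\theta \|\partial_t u(t)\|_{L^2(\omega)}^2\,\mathrm{d}t \le \kappa_\theta^2 \varepsilon^2,$$
which is precisely the reason the $H^1$-in-time norm of the observation enters the statement. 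Proposition \ref{proplc} applied to $v$ then yields, for every $t \in [0,\theta]$,
$$\|\partial_t u(t)\| \le \kappa_\theta\, R^{1 - ct/\theta}\, (\kappa_\theta \varepsilon)^{ct/\theta}.$$
Integrating in $t$ via the elementary change of variables $s = ct/\theta$ gives, whenever $\kappa_\theta \varepsilon < R$, a bound of the form $\int_0^\theta \|\partial_t u(t)\|\,\mathrm{d}t \le CR / |\log(\kappa_\theta \varepsilon / R)|$. Combining this with the bound on $\|u(\theta)\|$ and absorbing the term $\kappa_\theta \varepsilon$, which is $o(1/|\log \varepsilon|)$ as $\varepsilon \to 0^+$, into the logarithmic remainder produces the stated estimate after renaming the constants as $C$ and $C_1$.

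The main obstacle is really conceptual rather than computational: Proposition \ref{proplc} is exactly what allows the whole argument to function on $L^2(\mathbb{R}^N)$ in spite of the non-analyticity of the Ornstein-Uhlenbeck semigroup, and working with $v = \partial_t u$ is what makes the $H^1$-in-time regularity of the observation meaningful (the observation for $v$ is $\partial_t u|_\omega$). The rest is a routine bookkeeping of the constants $\kappa_\theta$, $R$ and $c = c(\theta)$ inside the integral $\int_0^\theta (\kappa_\theta \varepsilon / R)^{ct/\theta}\,\mathrm{d}t$ and a check that $\varepsilon$ can be taken small enough to force simultaneously $\kappa_\theta \varepsilon < R$ and $\log(C_1 \varepsilon) < 0$, so that the final expression has the advertised sign.
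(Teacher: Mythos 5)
Your proposal is correct and follows essentially the same route as the paper: set $z=\partial_t u$, use the identity $u_0=u(\theta,\cdot)-\int_0^\theta z(\tau,\cdot)\,\d\tau$, apply the observability inequality \eqref{obsineq} to both $u$ and $z$, apply Proposition \ref{proplc} to $z$ with the a priori bound $\|\mathcal{A}u_0\|\le R$, integrate the resulting power of $\|z(\theta,\cdot)\|$ in time, and absorb the $O(\varepsilon)$ term into the logarithmic remainder. The only cosmetic difference is that the paper packages the final absorption via the elementary inequality $\frac{\tau-1}{\log\tau}+\tau\le-\frac{1+\mathrm{e}^{-2}}{\log\tau}$ for $0<\tau<1$, while you argue that $\kappa_\theta\varepsilon=o(1/|\log\varepsilon|)$; both are fine.
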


\begin{proof}
Let $z=u_t$ and apply \eqref{obsineq} to $z$, we obtain
\begin{equation}
\|z(\theta, \cdot)\|_{L^{2}\left(\mathbb{R}^N\right)} \leq C\|z\|_{L^{2}\left(0, \theta ; L^{2}(\omega)\right)}. \label{in1}
\end{equation}
Applying the logarithmic convexity estimate to $z$ (Proposition \ref{proplc}), we have
\begin{equation}
\left\|z(t, \cdot)\right\|_{L^{2}\left(\mathbb{R}^N\right)} \leq \kappa_\theta R^{1-c\frac{t}{\theta}}\left\|z(\theta, \cdot)\right\|_{L^{2}\left(\mathbb{R}^N\right)}^{c\frac{t}{\theta}}, \quad 0 \leq t \leq \theta. \label{in2}
\end{equation}
Since
$$u(0, \cdot)=-\int_{0}^{\theta} z(\tau, \cdot) \d \tau + u(\theta, \cdot),$$
by \eqref{in1} and \eqref{in2} we have
\begin{align*}
\|u(0, \cdot)\|_{L^{2}\left(\mathbb{R}^N\right)} &\leq \int_{0}^{\theta}\left\|z(\tau, \cdot)\right\|_{L^{2}\left(\mathbb{R}^N\right)} \d \tau + \|u(\theta, \cdot)\|_{L^{2}\left(\mathbb{R}^N\right)} \\
&\leq C \int_{0}^{\frac{\theta}{c}}\left\|z(\theta, \cdot)\right\|_{L^{2}\left(\mathbb{R}^N\right)}^{\frac{c\tau}{\theta}} \d \tau + C\|u\|_{L^{2}\left(0, \theta ; L^{2}(\omega)\right)} \quad (c\in (0,1])\\
&\leq C \frac{\theta}{c} \frac{\|z(\theta, \cdot)\|_{L^{2}\left(\mathbb{R}^N\right)}-1}{\log \|z(\theta, \cdot)\|_{L^{2}\left(\mathbb{R}^N\right)}} + C\|u\|_{L^{2}\left(0, \theta ; L^{2}(\omega)\right)}\\
& \le C \left(\frac{E-1}{\log E} + E \right),\\
\end{align*}
where we denoted $E:=\|z(\theta, \cdot)\|_{L^{2}\left(\mathbb{R}^N\right)} + C\|u\|_{L^{2}\left(0, \theta ; L^{2}(\omega)\right)}$, and $C$ is a constant that varies from line to line. By \eqref{in1}, when the norm $\|u\|_{H^{1}\left(0, \theta ; L^{2}(\omega)\right)}$ is sufficiently small, we obtain
\begin{equation}
0 < E \le C_1\|u\|_{H^{1}\left(0, \theta ; L^{2}(\omega)\right)} <1 \label{eqt1}
\end{equation}
for some constant $C_1>0$. Using the inequality $\dfrac{\tau -1}{\log \tau} +\tau \leq -\dfrac{1+ \mathrm{e}^{-2}}{\log \tau}$ for $0<\tau <1$, with \eqref{in1} we obtain
\begin{align*}
\|u_0\|_{L^{2}\left(\mathbb{R}^N\right)} & \leq \frac{-C}{\log \left(C_1\|u\|_{H^1\left(0, \theta ; L^{2}(\omega)\right)}\right)}.
\end{align*}
\end{proof}

At this level, some comments should be made on the particular case of the heat equation (when $B=0$): Theorem \ref{thmlogstab1} enables us to derive a logarithmic stability result for a class of initial data of the heat equation posed in $\mathbb{R}^N$. A similar result was shown in \cite{XY'00} for the heat equation on a bounded domain $\Omega \subset \mathbb{R}^N$ with homogeneous Dirichlet boundary conditions on $\partial \Omega$.

Since the heat semigroup is analytic on $L^2\left(\mathbb{R}^N\right)$, we can show an improved logarithmic stability result for a large class of initial data
$$
\mathcal{I}_{\varepsilon, R}:=\left\{u_0 \in H^{2 \varepsilon}\left(\mathbb{R}^{N}\right) \colon \|u_0\|_{H^{2 \varepsilon}\left(\mathbb{R}^{N}\right)} \leq R\right\}
$$
for fixed $\varepsilon\in (0, 1)$ and $R > 0$. Although the used techniques are quite classical, we will give a full proof for the reader convenience.
\begin{theorem}\label{thmlogstab2}
Let $\theta>0$ be fixed and let $\omega \subset \mathbb{R}^N$ be a thick set. Assume also that $p \in \left(1,\dfrac{1}{1-\varepsilon}\right)$ and $s \in \left(0,1-\dfrac{1}{p}\right)$. Then there exists a positive constant $K(\varepsilon, R, \theta, \kappa_\theta,p,s)$ such that, for all $u_0 \in \mathcal{I}_{\varepsilon, R}$, we have
\begin{equation}
\|u_0\|_{L^{2}\left(\mathbb{R}^{N}\right)} \le K\left(\frac{\|u\|_{L^{2}(0,\theta ; L^2(\omega))}^{p}-1}{\log \|u\|_{L^{2}(0,\theta ; L^2(\omega))}}\right)^{\frac{s}{p}},
\end{equation}
where $u$ is the solution of system \eqref{e1} with $B=0$. Moreover, if $\|u\|_{L^{2}(0,\theta ; L^2(\omega))} <1$, then
\begin{equation}
\|u_0\|_{L^{2}\left(\mathbb{R}^{N}\right)} \le K\left(-\log \|u\|_{L^{2}(0,\theta ; L^2(\omega))}\right)^{-\frac{s}{p}}.
\end{equation}
\end{theorem}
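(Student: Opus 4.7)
Since $B=0$, the Ornstein-Uhlenbeck semigroup coincides with the self-adjoint analytic heat semigroup $\left(U(t)\right)_{t\ge 0}$, and Proposition \ref{proplc} specializes to the sharp inequality $\|U(t)u_0\|\le \|u_0\|^{1-t/\theta}\|U(\theta)u_0\|^{t/\theta}$ (with $c=1$, $\kappa_\theta=1$). Combining this with the observability inequality $\|U(\theta)u_0\|\le \kappa_\theta\|u\|_{L^2(0,\theta;L^2(\omega))}$ and the a priori bound $\|u_0\|\le R$, and writing $\gamma:=\kappa_\theta\|u\|_{L^2(0,\theta;L^2(\omega))}/R$, we get $\|U(t)u_0\|^p \le R^p\gamma^{pt/\theta}$. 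Integrating in $t$ via the elementary identity $\int_0^\theta \gamma^{pt/\theta}\,\d t = \theta(\gamma^p-1)/(p\log\gamma)$ directly produces the factor $(\gamma^p-1)/\log\gamma$:
\begin{equation*}
\int_0^\theta \|U(t)u_0\|^p\,\d t \;\le\; \frac{R^p\theta}{p}\cdot\frac{\gamma^p-1}{\log\gamma}.
\end{equation*}

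To recover $\|u_0\|$ itself I would use the analytic smoothing estimate $\|u_0-U(t)u_0\|_{L^2}\le Ct^\varepsilon\|u_0\|_{H^{2\varepsilon}}\le CRt^\varepsilon$, which follows from Plancherel's identity together with the pointwise bound $|1-\mathrm{e}^{-t|\xi|^2}|\le (t|\xi|^2)^\varepsilon$ valid for $\varepsilon\in[0,1]$. The reverse triangle inequality then gives $\|U(t)u_0\|\ge\|u_0\|/2$ whenever $t\le t_\ast := (\|u_0\|/(2CR))^{1/\varepsilon}$, and integrating this pointwise bound over $(0,t_\ast\wedge\theta)$ produces the lower bound $\int_0^\theta\|U(t)u_0\|^p\,\d t\ge 2^{-p}(t_\ast\wedge\theta)\|u_0\|^p$. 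Sandwiching with the upper bound from the previous step yields $\|u_0\|^{p+1/\varepsilon}\le C_\ast(\gamma^p-1)/\log\gamma$ when $t_\ast\le\theta$, and $\|u_0\|^p\le C_\ast'(\gamma^p-1)/\log\gamma$ in the complementary case; in both situations
\begin{equation*}
\|u_0\|\;\le\; C\left(\frac{\gamma^p-1}{\log\gamma}\right)^{\beta},
\end{equation*}
with $\beta = \varepsilon/(p\varepsilon+1)$ in the first case and $\beta = 1/p$ in the second.

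The final step is algebraic: the hypotheses $p<1/(1-\varepsilon)$ and $s<1-1/p$ together imply $s/p \le \varepsilon/(p\varepsilon+1) \le 1/p$ by direct computation. Thus, on the range where $(\gamma^p-1)/\log\gamma \le 1$, the elementary inequality $X^\beta \le X^{s/p}$ weakens the exponent from $\beta$ to $s/p$; on the complementary range, the a priori bound $\|u_0\|\le R$ suffices once the overall constant $K$ is taken large enough. A final rescaling replaces $\gamma$ by $\|u\|_{L^2(0,\theta;L^2(\omega))}$ (the two quantities $(\gamma^p-1)/\log\gamma$ and $(\|u\|^p-1)/\log\|u\|$ differ only by a bounded multiplicative factor on the relevant range), giving the first inequality of the theorem; the second inequality in the regime $\|u\|<1$ follows from the elementary bound $(\|u\|^p-1)/\log\|u\|\le -p/\log\|u\|$. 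The main obstacle is the case analysis around $t_\ast$ versus $\theta$ together with the algebraic passage to the weaker exponent $s/p$, but in each of the two regimes the estimate closes up cleanly.
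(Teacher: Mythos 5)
Your argument is correct, and it reaches the stated estimate by a genuinely different route in its second half. The first step (logarithmic convexity with $c=1$, $\kappa_\theta=1$ for $B=0$, observability, and the explicit integration $\int_0^\theta \gamma^{pt/\theta}\,\mathrm{d}t=\theta(\gamma^p-1)/(p\log\gamma)$) coincides with the paper's derivation of the $L^p(0,\theta;L^2)$ bound with the factor $(\gamma^p-1)/\log\gamma$. Where you diverge is in passing from the time-integrated bound back to $\|u_0\|$: the paper estimates $\|u_t\|_{L^p(0,\theta;L^2)}$ via the analytic-smoothing bound $\|(-\Delta)^{1-\varepsilon}\mathrm{e}^{t\Delta}\|\lesssim t^{-(1-\varepsilon)}$ (this is where $p<1/(1-\varepsilon)$ enters), interpolates to control $\|u\|_{W^{1-s,p}(0,\theta;L^2)}$, and invokes the Sobolev embedding $W^{1-s,p}\hookrightarrow C([0,\theta];L^2)$ (this is where $s<1-1/p$ enters) to evaluate at $t=0$. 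You instead prove the H\"older continuity at $t=0$, $\|u_0-U(t)u_0\|\le CRt^{\varepsilon}$, via Plancherel and $|1-\mathrm{e}^{-t|\xi|^2}|\le (t|\xi|^2)^{\varepsilon}$, and sandwich the time integral from below on $(0,t_*\wedge\theta)$; the hypotheses on $p$ and $s$ are then used only for the elementary exponent comparison $s/p<(p-1)/p^2<\varepsilon/(p\varepsilon+1)\le 1/p$, which I checked is correct. Your route is more elementary (no fractional Sobolev spaces in time) and in fact yields the stronger exponent $\varepsilon/(1+p\varepsilon)$ before you deliberately weaken it to $s/p$; the paper's route is the one that transfers to abstract analytic semigroups where only the bound $\|(-A)^{1-\varepsilon}\mathrm{e}^{tA}\|\lesssim t^{-(1-\varepsilon)}$ is available rather than an explicit Fourier representation. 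Your closing steps (absorbing $\kappa_\theta/R$ into the constant since $\tau\mapsto(\tau^p-1)/\log\tau$ changes by a bounded factor under rescaling, and the bound $(\tau^p-1)/\log\tau\le -p/\log\tau$ for $\tau<1$) match the paper's treatment in spirit and are sound.
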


\begin{proof}
Since $u_0 \in \mathcal{I}_{\varepsilon, R}$, we deduce by the logarithmic convexity that
\begin{equation*}
\|u(t)\|_{\left(\mathbb{R}^{N}\right)} \le R^{\frac{\theta-t}{\theta}}\left\|u\left(\theta\right)\right\|_{L^2\left(\mathbb{R}^{N}\right)}^{\frac{t}{\theta}} \quad\left(0 \le t \le \theta\right).
\end{equation*}
Assume $p > 1$. By integrating the above relation between $0$ and $\theta$, we obtain
$$\begin{aligned}
\int_{0}^{\theta}\|u(t)\|_{L^2\left(\mathbb{R}^{N}\right)}^{p} \d t & \le R^{p} \int_{0}^{\theta} \mathrm{e}^{p \frac{t}{\theta} \log \left(R^{-1}\left\|u\left(\theta\right)\right\|_{L^2\left(\mathbb{R}^{N}\right)}\right)} \d t \\
& \le \theta \frac{\left\|u\left(\theta\right)\right\|_{L^2\left(\mathbb{R}^{N}\right)}^{p}-R^{p}}{\log \left\|u\left(\theta\right)\right\|_{L^2\left(\mathbb{R}^{N}\right)}^{p}-\log R^{p}},
\end{aligned}$$
and thus
\begin{equation} \label{eqn1}
\|u\|_{L^{p}\left(0, \theta ; L^2\left(\mathbb{R}^{N}\right)\right)} \le \theta^{\frac{1}{p}}\left(\frac{\left\|u\left(\theta\right)\right\|_{L^2\left(\mathbb{R}^{N}\right)}^{p}-R^{p}}{\log \left\|u\left(\theta\right)\right\|_{L^2\left(\mathbb{R}^{N}\right)}^{p}-\log R^{p}}\right)^{\frac{1}{p}}. 
\end{equation}
On the other hand, by the semigroup representation of the solution $u$, we have
$$u_{t}=-(-\Delta)^{1-\varepsilon} \mathrm{e}^{t \Delta}(-\Delta)^{\varepsilon} u_{0}.$$
Moreover, since $\Delta$ is the generator of an analytic semigroup on $L^2\left(\mathbb{R}^{N}\right)$, we have
$$\left\|u_{t}(t)\right\|_{L^2\left(\mathbb{R}^{N}\right)} \le \frac{1}{t^{1-\varepsilon}}\left\|u_{0}\right\|_{H^{2\varepsilon}\left(\mathbb{R}^{N}\right)}.$$
We deduce from the above inequality that for $p \in \left(1,\dfrac{1}{1-\varepsilon}\right)$,
$$\left\|u_{t}\right\|_{L^{p}\left(0, \theta ; L^2\left(\mathbb{R}^{N}\right)\right)} \le R \frac{\theta^{\frac{1}{p}-(1-\varepsilon)}}{(1-p(1-\varepsilon))^{\frac{1}{p}}}.$$
Using $\|u(t)\|_{L^2\left(\mathbb{R}^{N}\right)} \le\|u(0)\|_{L^2\left(\mathbb{R}^{N}\right)} \le R$ for all $t$, we also have
\[
\|u\|_{L^{p}\left(0, \theta ; L^2\left(\mathbb{R}^{N}\right)\right)} \le R \theta^{\frac{1}{p}},
\]
and therefore
\begin{equation}
\|u\|_{W^{1, p}\left(0, \theta ; L^2\left(\mathbb{R}^{N}\right)\right)} \le R \theta^{\frac{1}{p}}\left(1+\frac{1}{\theta^{(1-\varepsilon)}(1-p(1-\varepsilon))^{\frac{1}{p}}}\right). \label{eqn2}
\end{equation}
Combining \eqref{eqn1} and \eqref{eqn2}, and using a Sobolev interpolation, we obtain for all $0<s<1$ that 
\[
\begin{aligned}
\|u\|_{W^{1-s, p}\left(0, \theta ; L^2\left(\mathbb{R}^{N}\right)\right)} \le C R^{1-s} \theta^{\frac{1}{p}}\left(1+\frac{1}{\theta^{(1-\varepsilon)}(1-p(1-\varepsilon))^{\frac{1}{p}}}\right)^{1-s} \\
\times\left(\frac{\left\|u\left(\theta\right)\right\|_{L^2\left(\mathbb{R}^{N}\right)}^{p}-R^{p}}{\log \left\|u\left(\theta\right)\right\|_{L^2\left(\mathbb{R}^{N}\right)}^{p}-\log R^{p}}\right)^{\frac{s}{p}}.
\end{aligned}
\]
Using the Sobolev embedding
\[
W^{1-s, p}\left(0, \theta ; L^2\left(\mathbb{R}^{N}\right)\right) \subset C\left(\left[0, \theta\right] ; L^2\left(\mathbb{R}^{N}\right)\right)
\]
for $(1-s) p>1$, there exists $K=K\left(R, \theta, p, \varepsilon, s\right)>0$ such that
\[
\|u(0)\|_{L^2\left(\mathbb{R}^{N}\right)} \le K\left(\frac{\left\|u\left(\theta\right)\right\|_{L^2\left(\mathbb{R}^{N}\right)}^{p}-R^{p}}{\log \left\|u\left(\theta\right)\right\|_{L^2\left(\mathbb{R}^{N}\right)}^{p}-\log R^{p}}\right)^{\frac{s}{p}}.
\]
Using the observability inequality \eqref{obsineq},
$$\|u(\theta)\|_{L^2\left(\mathbb{R}^{N}\right)} \leq \kappa_{\theta}\|u\|_{L^{2}(0, \theta ; L^2(\omega))},$$
we deduce that
\[
\|u(0)\|_{L^2\left(\mathbb{R}^{N}\right)} \le K\left(\frac{\kappa_{\theta}^{p}\|u\|_{L^{2}(0,\theta ; L^2(\omega))}^{p}-R^{p}}{\log \kappa_{\theta}^{p}\|u\|_{L^{2}(0,\theta ; L^2(\omega))}^{p}-\log R^{p}}\right)^{\frac{s}{p}}.
\]
Then considering both cases $\kappa_{\theta} \ge R$, $\kappa_{\theta} < R$, and using again the concavity of the logarithm function, we deduce that
$$\|u(0)\|_{L^2\left(\mathbb{R}^{N}\right)} \le K\left(\frac{\|u\|_{L^{2}(0,\theta ; L^2(\omega))}^{p}-1}{\log \|u\|_{L^{2}(0,\theta ; L^2(\omega))}}\right)^{\frac{s}{p}}.$$
This ends the proof.
\end{proof}

\begin{remark}
We emphasize that in Theorem \ref{thmlogstab2} we have considered observation sets $\omega \subset \mathbb{R}^N$ that are thick. Such a result improves \cite[Theorem 1.1]{ACCOP'20} in terms of observation where the authors consider open sets $\omega$ such that $\mathbb{R}^N\setminus \omega$ is bounded, which is far to be optimal.
\end{remark}

\section{Comments and open problems} \label{sec5}
We have investigated the interplay between observability and inverse problems for a class of equations which represents a prototype of parabolic equations with unbounded coefficients (of gradient type) via the logarithmic convexity estimate. More precisely, we have proven that the observability inequality along with the logarithmic convexity imply conditional logarithmic stability of initial data.

In this paper, we have considered a case including a non-analytic semigroup which has not been studied in the literature within this context. Also, stability estimates for initial data for parabolic equations with unbounded coefficients have not been considered before up to our knowledge, except in \cite{ACM'22} where analyticity have played a crucial role.

In the particular (but important) case of heat equation, i.e. the case without the drift term, that is $B=0$, we can improve some interesting results obtained in the recent paper \cite{ACCOP'20, ACCOP'23} in terms of the observability regions $\omega\subset \mathbb{R}^N$ for the reconstruction of the initial temperatures. More precisely, we can consider sharp observation regions $\omega$ given by thick sets instead of considering sets such that the unobserved region $\mathbb{R}^N\setminus \omega$ is bounded. Furthermore, for the general case $B\neq 0$, the results can be improved as far as the observation region $\omega$ is sharpened.

In the present paper, we have proven a logarithmic convexity estimate for the Ornstein-Uhlenbeck equation leveraging the explicit representation formula of the corresponding semigroup. This raises the following problems:
\begin{enumerate}
\item Can one prove a logarithmic convexity estimate in the absence of an explicit formula for the associated semigroup as in the analytic case? For instance, can we consider more general Ornstein-Uhlenbeck operators as $\mathcal{A}=\dv(Q \nabla)- R x\cdot x + B x \cdot \nabla,$
where $Q=Q^*$ and $R=R^*$ are real constant $N\times N$-matrices which are positive semidefinite? Under suitable assumptions on the matrices, $B, Q$ and $R$, the observability inequality holds true, see \cite[Subsection 6.4]{DSV'22}. Note that this case also covers some situations where the observation region $\omega$ is not thick.
\item Given the perturbation argument in Section \ref{sec2}, it is natural to ask whether the logarithmic convexity is preserved when a main abstract operator $\mathcal{A}$ is perturbed by an operator $\mathcal{B}$. This is true for instance in the analytic case, when $\mathcal{A}$ is perturbed by a bounded operator or more generally a relatively $\mathcal{A}$-bounded operator, since analyticity is preserved in this case, see e.g. \cite[Theorem 2.10, p.~176]{EN'99}.
\end{enumerate}

\section*{Acknowledgment}
The first named author would like to thank Giorgio Metafune for a fruitful discussion with invaluable comments.

\bibliographystyle{unsrt}

\end{document}